\documentclass[12pt]{article}

\usepackage{amssymb}
\usepackage{amsmath,color}

\usepackage[T1]{fontenc} 
\usepackage{lmodern}\usepackage{graphicx}
\linespread{1.05} 
\usepackage{microtype} 

\usepackage[hmarginratio=1:1,top=32mm,columnsep=20pt]{geometry} 

\usepackage{amsthm}
\usepackage{url}

\newtheorem{thm}{Theorem}[section]
\newtheorem{cor}[thm]{Corollary}

\newtheorem{prop}[thm]{Proposition}
\newtheorem{defn}[thm]{Definition}

\setlength{\parskip}{0.3cm}


\title{\vspace{-15mm}\fontsize{24pt}{10pt}\selectfont\textbf{New equivalences to
axioms weaker than \textbf{AC} in topology}} 

\author{
\large
\textsc{Daniel de la Concepci\'on}\thanks{FPU12 Grant from the Ministerio de Educaci\'on, Cultura y Deporte in Spain}\\[2mm] 
\normalsize Universidad de la Rioja \\
Departamento de Matem\'aticas y Computaci\'on\\ 
\vspace{-5mm}
}
\date{}


\begin{document}

\maketitle 


\begin{abstract}
In this work, new equivalences of topological statements and weaker axioms
than \textbf{AC} are proven. This equivalences include the use of anti-properties.
All this equivalences have been checked with a computer using the theorem proving system
Isabelle/Isar and are available at the isarmathlib repository.
\end{abstract}

\section{Introduction}

The close relation between set theory and general topology makes it easy for different equivalences between several
axioms independent of $\textbf{ZF}$ and several theorems in general topology to appear.

An example of this relation lies, for example, in the fact that several topological properties are defined in terms of cardinals such as
the Lindelöf property or the first or second countable properties which are related to the first uncountable cardinal, and the relation between them
depend on the relations between the cardinals which may vary drastically choosing a different set
of axioms.

In this paper, we prove the equivalence between axioms which are weaker than ${\textbf{AC}}$ and statements that consider only topological properties. This has been done already in several papers, like \cite{Ho96,Ho02,Ho97} and the references therein. The difference with the present paper is that we include the concept of anti-property which has been studied only considering $\bf{ZFC}=\bf{ZF}+\bf{AC}$ since this concept is defined over the assumption that every set is bijective with some cardinal. Readers can check the expansion of this theories in~\cite{Br98His}, an historical survey from 1998, and the references therein.

\subsection{Definitions and notation}

In \textbf{ZF} the classes of sets are defined as predicates in one variable. For convenience, we will write a class as the collection of sets that make this predicate hold. In consequence, we will write $A\in P$ when $P(A)$; and $P\subseteq Q$ when $\forall A,\ P(A)\Rightarrow Q(A)$.

Also, we consider the cardinality of the sets that are bijective with some ordinal. We define this cardinal as the least ordinal, in the order of ordinals, bijective to that set. The rest of the sets do not have a cardinality. When we write $|A|$, it implies that $A$ has a cadinality.

Under this construction, the class of cardinals is totally ordered. Hence, we will denote by $\kappa^+$ as the succesor cardinal of the cardinal $\kappa$.
We consider also $\aleph_0$ the first infinite cardinal and $\aleph_{\alpha+1}$ as $\aleph_\alpha^+$.

\begin{defn}
A topological space is said to be $\kappa$-Lindelöf, for a cardinal $\kappa$ iff every open cover
$\mathcal{U}$ has a subcover $\mathcal{V}$ with $|\mathcal{V}|<\kappa$.
\end{defn}

A few classical examples of the previous properties are that $\aleph_0$-Lindelöf $=$ Compact and $\aleph_1$-Lindelöf $=$ Lindelöf.

\begin{defn}
A topological space $(X,\tau)$ is said to have weight smaller than $\kappa$, for a cardinal $\kappa$ iff there is a base $B$ with $|B|<\kappa$.
\end{defn}

\begin{defn}\cite{Ban79}
A topological property is a property of topological spaces that is preserved by homeomorphisms.

Given a topological property $P$, we defined its spectrum $Spec(P)$ as the class of sets such that
any topology defined over them has the property $P$.

Then anti-$P$ is defined as: A topological space is anti-$P$ iff the only subspaces that have the property $P$ are those in its spectrum.
\end{defn}

A classical example of this concept is: $A\in Spec(Compact)$ iff $A$ is finite; and hence the anti-compact spaces are those that appear in the literature as pseudo-finite spaces. Other examples are anti-perfect $=$ scattered or anti-connected $=$ totally disconnected.

Let's compute what anti-hyperconnected means to give an idea of the methods involved in this theory:

\begin{defn}
A topological space is hyperconnected iff every pair of non-empty open sets intersect.

A topological space is sober iff every non-empty hyperconnected subspace is the closure of a point
and that point is unique.
\end{defn}

\begin{thm}
A topological space is anti-hyperconnected iff it is $T_1$ and sober.
\end{thm}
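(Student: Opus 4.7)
The plan is to first unwind the definition of anti-hyperconnected by computing the spectrum of hyperconnectedness. I would check that a set $A$ lies in $Spec(\text{hyperconnected})$ exactly when $|A|\le 1$: if $A$ is empty or a singleton, any topology on $A$ trivially has at most one non-empty open set and so is hyperconnected, while if $A$ has two or more points, the discrete topology on $A$ (which exists in \textbf{ZF} with no choice) splits $A$ into disjoint non-empty opens, killing hyperconnectedness. Hence being anti-hyperconnected amounts to: every hyperconnected subspace has at most one point.

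With that reformulation in hand, the forward implication ($T_1$ and sober $\Rightarrow$ anti-hyperconnected) is immediate. I would take a non-empty hyperconnected subspace $Y$; sobriety gives $Y=\overline{\{x\}}$ for a unique $x$, and the $T_1$ hypothesis collapses $\overline{\{x\}}$ to $\{x\}$, so $Y$ is a singleton, as required.

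For the converse I would first establish the auxiliary fact that for every point $x$ of an arbitrary topological space, the closure $\overline{\{x\}}$ equipped with the subspace topology is hyperconnected: if $U,V$ are non-empty subspace-opens, lifted to opens $U',V'$ in $X$ that meet $\overline{\{x\}}$, then by the definition of closure $x\in U'\cap V'$, which forces $U\cap V$ to contain $x$ and hence to be non-empty. From the anti-hyperconnected hypothesis it follows that $\overline{\{x\}}$ is a singleton for every $x$, i.e.\ the space is $T_1$. For sobriety I would again take a non-empty hyperconnected subspace $Y$; anti-hyperconnectedness forces $Y=\{y\}$, and the already established $T_1$ property gives $\overline{\{y\}}=\{y\}=Y$, while uniqueness of the generic point is automatic because $Y$ has a single element.

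I do not expect a serious obstacle: the delicate step is the spectrum computation, which must be done in plain \textbf{ZF} (so I would emphasize that the discrete topology is available without any choice), and the verification that $\overline{\{x\}}$ is hyperconnected as a subspace, which is a short but load-bearing observation that links the purely external notion of anti-hyperconnectedness to the internal $T_1$ axiom.
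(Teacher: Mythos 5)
Your proposal is correct and follows essentially the same route as the paper's proof: the same computation of $Spec(\text{hyperconnected})$ as the sets with at most one point, the same auxiliary observation that $\overline{\{x\}}$ is hyperconnected in the subspace topology, and the same two-way argument deducing $T_1$ and sobriety from anti-hyperconnectedness and conversely. No gaps; only the order in which you treat the two implications differs from the paper.
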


\begin{proof}
The first step in our proof is to compute the spectrum of hyperconnection.

Assume that $A$ has more than $1$ point, then the discrete topology on $A$
gives a non hyperconnected space. Hence $A$ is not in the spectrum.

If $A$ has no more than $1$ point, the only topology that can be defined in it
is the discrete one. This topology has no pair of non-empty open sets, and
then is trivially hyperconnected.

The conclusion is: $A\in Spec(Hyperconnection)$ iff $A$ has no more than $1$ point.

Let's proof next an auxiliary result: The closure of a point is always hyperconnected.

Consider $(X,\tau)$ a topological space and $x\in X$. Consider then, the subspace given by the subset $cl(\{x\})$.
Take any $U,V\in \tau$ such that $U_s=U\cap cl(\{x\})\ne\emptyset$ and $V_s=V\cap cl(\{x\})\ne\emptyset$. By definition of closure, $y\in cl(\{x\})$ iff every open set that contains $y$ also contains $x$. In conclusion, since $U_s\ne\emptyset$ then $x\in U$. The same happens for $V$. Then $x\in U_s\cap V_s$ and it follows that $U_s\cap V_s\ne\emptyset$. Hence the space $cl(\{x\})$ is hyperconnected.

At last, let's proof the theorem:

Assume that $(X,\tau)$ is anti-hyperconnected.

Consider $x\in X$. As the previous lemma
states, $cl(\{x\})$ is hyperconnected as a subspace of $(X,\tau)$. From the definition of $Spec(Hyperconnection)$, follows that $cl(\{x\})$ has no more than one point; and so, $cl(\{x\})=\{x\}$.
In conclusion, $\{x\}$ is a closed set and hence $(X,\tau)$ is $T_1$.

Consider $Y$ a non-empty hyperconnected subspace of $(X,\tau)$. By the definition of anti-property, it follows that $Y$ has one point, $Y=\{y\}$. Now $Y=cl(\{y\})$, since the space is $T_1$ and if $cl(\{x\})=Y$ it follows that $x\in Y$ and hence $x=y$. This is the definition of a sober space.

Assume now that $(X,\tau)$ is $T_1$ and sober.

Consider $Y$ an hyperconnected subspace. Since the space is sober, $Y=cl(\{y\})$ for some $y\in Y$ or $Y=\emptyset$. Since the space is $T_1$, $Y=\{y\}$ or $Y=\emptyset$. It follows then that $Y\in Spec(Hyperconnection)$. In conclusion $(X,\tau)$ is anti-hyperconnected.
\end{proof}

Let's define some weaker forms of \textbf{AC} that will appear from now on:

\begin{defn}
${\bf C}(\kappa,S)$ for a cardinal $\kappa$ and a set $S$ means: 

For every family of sets $\{\mathcal{N}_t\}_{t\in A}\subseteq Pow(S)$ and $|A|\leq\kappa$, exists $f:A\to S$ such that $\forall t\in A, f(t)\in \mathcal{N}_t$.

Also, if $\kappa=\aleph_0$ we write ${\bf CC}(S)$. If ${\bf C}(\kappa,S)$ holds for every set $S$, then we write ${\bf C}(\kappa)$; and if ${\bf C}(\kappa,S)$ holds for every cardinal $\kappa$, then we write ${\bf AC}(S)$.
\end{defn}

It is easy to prove that if $S$ and $T$ are bijective sets, then ${\bf C}(\kappa,S)\Leftrightarrow{\bf C}(\kappa,T)$.

\subsection{Previous results}

There has been several articles on the subject of relations between weaker forms of choice and topological statements. Here there is a list of known propositions of which we'll make use and that we will try to generalize.

\begin{thm}\cite{Ho97}
Equivalent are:
\begin{enumerate}
\item ${\bf CC}(Pow(\aleph_0))$
\item $\mathbb{N}$ with the discrete topology is Lindelöf.
\item Every topological space with weight smaller than $\aleph_1$ is Lindelöf.
\item $\mathbb{R}$ is Lindelöf.
\item $\mathbb{Q}$ is Lindelöf.
\item Every unbounded subset of $\mathbb{R}$ contains un unbounded sequence.
\end{enumerate}
\end{thm}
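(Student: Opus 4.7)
The plan is to prove the equivalence by traversing the ring $(1) \Rightarrow (3) \Rightarrow (2) \Rightarrow (1)$ together with the paths $(3) \Rightarrow (4) \Rightarrow (6) \Rightarrow (1)$ and $(3) \Rightarrow (5) \Rightarrow (2)$. All six statements are essentially recastings of countable choice for families of subsets of $Pow(\aleph_0)$; the strategy is to push this choice through the topological packaging and back.

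For $(1) \Rightarrow (3)$, given a space $(X, \tau)$ with a countable base $B$ and an open cover $\mathcal{U}$, each $U \in \mathcal{U}$ is determined by $\phi(U) = \{b \in B : b \subseteq U\}$, since $U = \bigcup \phi(U)$. For each $b \in B$ contained in some element of $\mathcal{U}$, the family $\mathcal{N}_b = \{\phi(U) : U \in \mathcal{U},\ b \subseteq U\}$ is a non-empty subset of $Pow(B) \cong Pow(\aleph_0)$, and $\mathbf{CC}(Pow(\aleph_0))$ picks a $U_b$ for each such $b$, forming a countable subcover. Since $\mathbb{N}$ discrete, $\mathbb{R}$, and $\mathbb{Q}$ each have a countable base, $(3)$ immediately gives $(2)$, $(4)$, and $(5)$. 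The implication $(5) \Rightarrow (2)$ is clinched by the \textbf{ZF}-fact that closed subspaces of Lindel\"of spaces are Lindel\"of (the canonical extension $V^{*} = \bigcup\{U \in \tau_X : U \cap Y \subseteq V\}$ avoids choice) together with $\mathbb{N}$ sitting as a closed discrete subspace of $\mathbb{Q}$. For $(4) \Rightarrow (6)$, if $S \subseteq \mathbb{R}$ is unbounded, the cover $\{(-|s|, |s|) : s \in S\}$ of $\mathbb{R}$ has a countable subcover by Lindel\"ofness; from each interval $(-a, a)$ in the subcover I canonically recover an element of $S$ (take $a$ if $a \in S$, else $-a$), producing the desired unbounded sequence.

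For $(2) \Rightarrow (1)$, given non-empty families $\mathcal{N}_n \subseteq Pow(\mathbb{N})$, I identify the underlying set of the discrete space with $\mathbb{N} \times \mathbb{N}$ and build the cover whose elements are $U_{n, A} = \{(n, 0)\} \cup \{(n, k+1) : k \in A\}$ for $n \in \mathbb{N}$ and $A \in \mathcal{N}_n$, augmented by the singletons $\{(m, k+1)\}$ for $m, k \in \mathbb{N}$. Since $(n, 0)$ belongs only to sets of the form $U_{n, \cdot}$, every countable subcover must contain at least one $U_{n, A_n}$ for each $n$; being countable the subcover is well-orderable, and defining $f(n)$ as the $A$ for which $U_{n, A}$ is the least such in the chosen well-ordering yields the required choice function.

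The hard part is $(6) \Rightarrow (1)$, the step that closes the cycle, since a single unbounded sequence must yield a choice at \emph{every} index $n$. My trick is to apply $(6)$ to an unbounded set built from \emph{finite} partial choices, which always exist by finite choice in \textbf{ZF}. Setting $\mathcal{M}_k = \prod_{i < k} \mathcal{N}_i$, each $\mathcal{M}_k$ is non-empty, and a canonical injection $\phi_k : \mathcal{M}_k \to [k, k+1)$ (for example, by base-$3$ interleaving of the characteristic functions of the coordinates, to avoid dyadic ambiguity) produces the unbounded set $T = \bigcup_k \phi_k(\mathcal{M}_k) \subseteq [0, \infty)$. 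By $(6)$, $T$ contains an unbounded sequence $(t_j)_{j \in \mathbb{N}}$; for each $l$, the canonical choice $j_l = \min\{j : t_j > l\}$ gives $f_{j_l} \in \mathcal{M}_{\lfloor t_{j_l} \rfloor}$ with $|\mathrm{dom}(f_{j_l})| > l$. Defining $g(n) = f_{j_{n+1}}(n) \in \mathcal{N}_n$ for each $n$ produces the sought global choice function, closing the cycle.
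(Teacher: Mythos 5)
The paper never actually proves this theorem---it is quoted from Herrlich's paper and used as a black box; the closest in-house argument is Theorem~\ref{eq1}, which generalizes items (1)--(3) to an arbitrary infinite cardinal $\kappa$. Your proof is correct, and it is worth recording how it differs. For the triangle (1)(2)(3) you orient the cycle as choice $\Rightarrow$ weight $\Rightarrow$ discrete $\Rightarrow$ choice, whereas Theorem~\ref{eq1} runs discrete $\Rightarrow$ weight $\Rightarrow$ choice $\Rightarrow$ discrete; the two key devices are nonetheless the same (coding each member $U$ of a cover by the set of basic open sets it contains, which is injective since $U=\bigcup\phi(U)$, and exploiting the well-orderability of a countable subcover to turn ``there exists a subcover'' into a canonical selection). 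Your $(2)\Rightarrow(1)$ is in fact slightly cleaner than the paper's $\kappa\times A$ argument, because adjoining the anchor point $(n,0)$ to every $U_{n,A}$ forces each index $n$ to be represented in the subcover and silently handles the degenerate case $\mathcal{N}_n=\{\emptyset\}$, which the paper must treat separately. Items (4)--(6), for which the paper gives nothing, are handled correctly: the closed-subspace lemma via $V^{*}=\bigcup\{U\in\tau_X:U\cap Y\subseteq V\}$ is genuinely choice-free since $V=V^{*}\cap Y$ lets you invert the extension canonically, and your $(6)\Rightarrow(1)$---coding the nonempty finite partial-choice sets $\prod_{i<k}\mathcal{N}_i$ injectively into $[k,k+1)$ so that a single unbounded sequence decodes to arbitrarily long partial choice functions---is the real content of closing the cycle and is sound. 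One cosmetic slip: from $t_{j_l}>l$ you get $\lfloor t_{j_l}\rfloor\geq l$, not $>l$; since you evaluate $g(n)=f_{j_{n+1}}(n)$ this still puts $n$ in the domain, so nothing breaks.
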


On the anti-properties there have been also some results that are useful in our work. They study the implications of anti-properties when the original properties are related. The following are the first simple results by Bankston in his original paper.

\begin{prop}\cite{Ban79}\label{ban}
Consider $P$ and $Q$ topological properties.

\begin{enumerate}
\item If $P\Rightarrow Q$, then $Spec(P)\subseteq Spec(Q)$. 
\item If $P\Rightarrow Q$ and $Spec(P)= Spec(Q)$, then anti-$Q\Rightarrow$ anti-$P$.
\item anti-$P$ and anti-$Q$ need not to be related in general when $P\Rightarrow Q$.
\item If a topological space $(X,\tau)$ is $P$ and anti-$P$, then $X$ is in $Spec(P)$.
\end{enumerate}
\end{prop}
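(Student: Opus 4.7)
The plan is to handle parts (1), (2), and (4) directly from the definitions, and to address (3) by producing explicit counterexamples, which is where the real work lies.

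For (1), I would pick any $A\in Spec(P)$ and any topology $\tau$ on $A$. By the definition of the spectrum, $(A,\tau)$ has $P$, and the hypothesis $P\Rightarrow Q$ then gives that $(A,\tau)$ has $Q$, so $A\in Spec(Q)$.

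For (2), assume $P\Rightarrow Q$ and $Spec(P)=Spec(Q)$, and let $(X,\tau)$ be anti-$Q$. Given a subspace $Y\subseteq X$ with property $P$, the implication $P\Rightarrow Q$ gives that $Y$ has $Q$, and then the anti-$Q$ assumption forces $Y\in Spec(Q)=Spec(P)$. Thus every subspace of $X$ with property $P$ lies in $Spec(P)$, which is exactly the statement that $(X,\tau)$ is anti-$P$. For (4), the whole space $X$ is a subspace of itself that has property $P$ by assumption, so the anti-$P$ condition applied to $Y=X$ gives $X\in Spec(P)$ immediately.

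The main obstacle is (3), which is a non-implication and therefore requires concrete counterexamples rather than an argument. I would need to exhibit comparable properties $P\Rightarrow Q$ together with one space witnessing anti-$P\not\Rightarrow$ anti-$Q$ and another witnessing anti-$Q\not\Rightarrow$ anti-$P$. A natural place to search is among properties whose spectra differ strictly, such as compactness (whose spectrum is the class of finite sets) versus the Lindel\"of property (whose spectrum contains also infinite countable sets under mild choice assumptions). In that setting one expects to find, on one hand, a space whose compact subspaces are all finite but which contains a large Lindel\"of subspace outside $Spec(\mathrm{Lindel\ddot{o}f})$, and on the other hand a space in which every Lindel\"of subspace is countable but which contains a compact infinite subspace. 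Verifying these two witnesses is routine but the actual construction requires some care.
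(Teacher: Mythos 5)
The paper offers no proof of this proposition at all; it is quoted directly from Bankston's paper, so there is nothing internal to compare your attempt against. Your arguments for parts (1), (2) and (4) are correct and complete: each is the direct unwinding of the definitions that any proof must perform, and in (2) you correctly isolate the role of the hypothesis $Spec(P)=Spec(Q)$ (without it the argument only yields $Y\in Spec(Q)$, which need not place $Y$ in $Spec(P)$).

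Part (3) is where there is a genuine gap: a ``need not be related'' claim is established only by exhibiting witnesses, and you stop at describing what the witnesses should look like. The examples you gesture toward do work, but they must actually be written down. For $P=$ compact and $Q=$ Lindel\"of (so $P\Rightarrow Q$, with $Spec(P)$ the finite sets and $Spec(Q)$ the countable sets in \textbf{ZFC}): the one-point compactification of $\mathbb{N}$ is anti-Lindel\"of (every subspace of it is countable, hence lies in $Spec(\text{Lindel\"of})$) but not anti-compact (it is itself an infinite compact subspace of itself, which by part (4) would force it into $Spec(\text{Compact})$, the finite sets); conversely, $\mathbb{R}$ with the cocountable topology is anti-compact (its compact subspaces are exactly the finite ones) but not anti-Lindel\"of (the whole space is a Lindel\"of subspace of itself and is uncountable). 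One caveat relevant to this paper's setting: the first witness requires $Spec(\text{Lindel\"of})$ to contain the countably infinite sets, i.e.\ ${\bf CC}(Pow(\aleph_0))$ --- indeed the corollary following Theorem~\ref{eq3} shows that the existence of an anti-Lindel\"of space that is not anti-compact is \emph{equivalent} to that axiom --- so part (3), proved along the lines you propose, is a \textbf{ZFC} statement rather than a \textbf{ZF} one. That is consistent with the proposition being cited from Bankston's choice-assuming context, but it is worth making explicit given the concerns of this paper.
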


\section{Computations of spectra without choice}

To work with the anti-properties, it is necessary to compute the spectrum of the original property. This isn't always easy as we shall show:

\begin{thm}
If a topological property $P$ is preserved by coarser topologies, then $A$ is in $Spec(P)$ iff $(A,Pow(A))$ has the property $P$.
\end{thm}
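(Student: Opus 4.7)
The plan is to give a short two-direction argument, noting that no form of choice is needed for this particular statement; the content is essentially that the discrete topology is the finest topology on $A$.

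For the forward direction ($\Rightarrow$), I would simply invoke the definition of $Spec(P)$: if $A \in Spec(P)$, then by definition every topology on $A$ has property $P$. Since $Pow(A)$ is in particular a topology on $A$ (the discrete topology), $(A, Pow(A))$ has $P$.

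For the backward direction ($\Leftarrow$), assume $(A, Pow(A))$ has property $P$ and let $\tau$ be any topology on $A$. The key observation is that $\tau \subseteq Pow(A)$ automatically, since every element of $\tau$ is a subset of $A$. Thus $\tau$ is coarser than $Pow(A)$ in the standard sense. By the hypothesis that $P$ is preserved by coarser topologies, $(A,\tau)$ has $P$. Since $\tau$ was arbitrary, every topology on $A$ has $P$, so $A \in Spec(P)$.

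The argument is essentially formal, and I do not anticipate a real obstacle. The only point that requires care is fixing the convention for ``preserved by coarser topologies'' — namely, if $\tau' \subseteq \tau$ on the same underlying set and $(X,\tau)$ has $P$, then $(X,\tau')$ has $P$ — so that it applies directly to the pair $\tau \subseteq Pow(A)$ without having to pass to subspaces or continuous images. Once this convention is stated, both implications are one line each.
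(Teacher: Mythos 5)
Your proof is correct and follows essentially the same two-line argument as the paper: the forward direction is immediate from the definition of $Spec(P)$, and the backward direction uses $\tau\subseteq Pow(A)$ together with preservation under coarser topologies. Your explicit remark about fixing the convention for ``preserved by coarser topologies'' is a reasonable point of care, but the substance of the argument is identical to the paper's.
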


\begin{proof}
Assume that $A$ is $Spec(P)$, then $(A,Pow(A))$ has the property $P$ by the definition of spectrum.

Assume that $(A,Pow(A))$ has the property $P$, and consider $(A,\tau)$ a topological space over $A$.
By assumption, since $\tau\subseteq Pow(A)$, it follows that $(A,\tau)$ has the property $P$. In conclusion,
$A$ is in $Spec(P)$ by definition.
\end{proof}

Applying this result to the Lindelöf family of properties:

\begin{cor}\label{eqSpecLin}
The set $A$ is in $Spec(\kappa$-Lindelöf$)$ iff $(A,Pow(A))$ is $\kappa$-Lindelöf.
\end{cor}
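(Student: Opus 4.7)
The plan is to deduce this corollary as a direct instance of the preceding theorem. That theorem asserts that, for any topological property $P$ preserved under passage to coarser topologies, membership in $Spec(P)$ is equivalent to $(A,Pow(A))$ having property $P$. So the only thing I need to verify is that $\kappa$-Lindelöf is such a property.

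To that end, suppose $(X,\tau)$ is $\kappa$-Lindelöf and $\sigma\subseteq\tau$ is a coarser topology on $X$. Given any $\sigma$-open cover $\mathcal{U}$ of $X$, the inclusion $\sigma\subseteq\tau$ tells us that $\mathcal{U}$ is also a $\tau$-open cover. By the $\kappa$-Lindelöf hypothesis applied in $\tau$, there exists a subcover $\mathcal{V}\subseteq\mathcal{U}$ with $|\mathcal{V}|<\kappa$. Since the sets of $\mathcal{V}$ still belong to $\sigma$ (they are in $\mathcal{U}\subseteq\sigma$), the family $\mathcal{V}$ witnesses that $(X,\sigma)$ is $\kappa$-Lindelöf. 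This establishes the preservation under coarsening.

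With this verification in hand, the previous theorem applies verbatim and yields the stated equivalence: $A\in Spec(\kappa$-Lindelöf$)$ iff $(A,Pow(A))$ is $\kappa$-Lindelöf. No choice is involved anywhere in this argument, which is important given the paper's general setting of $\mathbf{ZF}$ without $\mathbf{AC}$.

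There is no genuine obstacle here; the entire content lies in the trivial observation about coarser topologies. The corollary is essentially a bookkeeping step that will be useful later, since it reduces the computation of $Spec(\kappa$-Lindelöf$)$ for a set $A$ to a statement about open covers by arbitrary subsets of $A$, which can then be analyzed using the choice principles $\mathbf{C}(\kappa,\cdot)$ introduced earlier in the paper.
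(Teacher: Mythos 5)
Your proposal is correct and matches the paper's intent exactly: the corollary is stated as a direct application of the preceding theorem, and the only content is the (easy) verification that $\kappa$-Lindelöf is preserved when passing to a coarser topology, which you carry out correctly. The paper leaves that verification implicit, so your write-up simply makes explicit the step the paper omits.
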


\begin{cor}\label{cor1}
The spectra of Lindelöf properties have the following properties:
\begin{itemize}
\item For cardinals $\kappa$ and $\mu$; if $\kappa\leq \mu$, then
 $Spec(\kappa$-Lindelöf$)\subseteq Spec(\mu$-Lindelöf$)$. 
\item Also, if $\exists f:B\to A$ injective and $A$ in $Spec(\kappa$-Lindelöf$)$, $B$ is also in $Spec(\kappa$-Lindelöf$)$.
\item And $\kappa$ is not in $Spec(\kappa$-Lindelöf$)$
\end{itemize}
\end{cor}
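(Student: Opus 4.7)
The plan is to reduce all three items to Corollary~\ref{eqSpecLin}, which lets us replace membership in $Spec(\kappa\text{-Lindel\"of})$ by the assertion that the discrete topology $(\cdot,Pow(\cdot))$ is $\kappa$-Lindel\"of. From there each clause becomes a direct manipulation of open covers in discrete spaces.

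For the first item, suppose $A\in Spec(\kappa\text{-Lindel\"of})$, i.e.\ $(A,Pow(A))$ is $\kappa$-Lindel\"of, and let $\kappa\le\mu$. Any open cover of $(A,Pow(A))$ admits a subcover of cardinality $<\kappa$, hence of cardinality $<\mu$, so $(A,Pow(A))$ is $\mu$-Lindel\"of and $A\in Spec(\mu\text{-Lindel\"of})$. This step is purely a cardinal inequality and presents no difficulty.

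For the second item, let $f:B\to A$ be injective with $(A,Pow(A))$ being $\kappa$-Lindel\"of, and let $\mathcal{U}\subseteq Pow(B)$ cover $B$. Push the cover forward by defining
\[
\mathcal{V}=\{f(U):U\in\mathcal{U}\}\cup\{A\setminus f(B)\}\subseteq Pow(A),
\]
which is a cover of $A$ in its discrete topology. By hypothesis there is a subcover $\mathcal{V}'\subseteq\mathcal{V}$ with $|\mathcal{V}'|<\kappa$. Pulling back via $f^{-1}$ (using injectivity of $f$ to ensure the preimage assignment $f(U)\mapsto U$ is well defined and does not collapse elements) and discarding the possible extra set $A\setminus f(B)$, one obtains a subfamily of $\mathcal{U}$ of cardinality $<\kappa$ that still covers $B$. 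The only subtlety here is justifying that the pullback does not increase cardinality, which is guaranteed because $f$ is injective.

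For the third item, consider the cover $\mathcal{U}=\{\{\alpha\}:\alpha\in\kappa\}$ of $(\kappa,Pow(\kappa))$. Every singleton $\{\alpha\}$ with $\alpha\in\kappa$ is required in any subcover since it is the unique element of $\mathcal{U}$ containing $\alpha$; hence the only subcover is $\mathcal{U}$ itself, and $|\mathcal{U}|=\kappa\not<\kappa$. Thus $(\kappa,Pow(\kappa))$ is not $\kappa$-Lindel\"of, and by Corollary~\ref{eqSpecLin} $\kappa\notin Spec(\kappa\text{-Lindel\"of})$. No choice principle is invoked anywhere in this argument, which is the point of stating the corollary in the present section.
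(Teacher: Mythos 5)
Your proof is correct, and choice-free as required, but it routes the first two items somewhat differently from the paper. For the first item the paper simply observes that $\kappa$-Lindel\"of implies $\mu$-Lindel\"of as topological properties and invokes Proposition~\ref{ban}(1) to get the inclusion of spectra; you instead unwind this through Corollary~\ref{eqSpecLin} and argue on the discrete space directly, which amounts to the same thing. The real divergence is in the second item: the paper regards $(B,Pow(B))$ as a closed subspace of $(A,Pow(A))$ via the injection $f$ (every subset of a discrete space being closed) and appeals to the heredity of $\kappa$-Lindel\"ofness to closed subspaces, whereas you carry out the transfer by hand --- pushing a cover of $B$ forward along $f$, padding with $A\setminus f(B)$, extracting a small subcover, and pulling back using injectivity to keep the preimage assignment single-valued and cardinality-preserving. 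Your version is more self-contained (it does not presuppose the closed-subspace lemma) and makes explicit that no choice is used in the pullback; the paper's version is shorter once that standard lemma is granted. Your third item is the paper's argument with one improvement: you note that each singleton $\{\alpha\}$ is the unique member of the cover containing $\alpha$, so the only subcover is the whole cover of size $\kappa\not<\kappa$, which is more precise than the paper's claim that the cover ``has no subcover.''
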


\begin{proof}
Since $\kappa$-Lindelöf $\Rightarrow \mu$-Lindelöf, from \ref{ban} the first statement follows.

Assume that $f:B\to A$ is injective. Then $f:(B,Pow(B))\to (A,Pow(A))$ is continuous and injective.
It can be consider then that $(B,Pow(B))$ is a closed subspace of $(A,Pow(A))$. It follows from \ref{eqSpecLin} that $(A,Pow(A))$ is $\kappa$-Lindelöf. Then $(B,Pow(B))$ is also $\kappa$-Lindelöf since $B$ is closed, and hence $B\in Spec(\kappa$-Lindelöf$)$.

Consider $\mathcal{U}=\{\{x\}|\ x\in \kappa\}$. $\mathcal{U}$ is an open cover for $(\kappa,Pow(\kappa))$ which has no subcover. Since $|\mathcal{U}|=\kappa$, it follows that $(\kappa,Pow(\kappa))$ is not $\kappa$-Lindelöf. In conclusion, $\kappa\notin Spec(\kappa$-Lindelöf$)$.
\end{proof}

To see the difficulty on computing spectra in \textbf {ZF}, let's compute $Spec($Lindelöf$)$:

This following theorem generalizes some points that appeared in~\cite{Ho97}.

\begin{thm}\label{eq1}
The following are equivalent for any infinite cardinal $\kappa$:
\begin{enumerate}
\item $\kappa$ with its discrete topology is $\kappa^+$-Lindelöf.
\item Every topological space with weight smaller than $\kappa^+$ is $\kappa^+$-Lindelöf.
\item $\textbf{C}(\kappa,Pow(\kappa))$
\end{enumerate}
\end{thm}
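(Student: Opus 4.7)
The plan is to close the cycle $(3) \Rightarrow (2) \Rightarrow (1) \Rightarrow (3)$. The implication $(2) \Rightarrow (1)$ is immediate, since $\{\{x\} : x \in \kappa\}$ is a base of cardinality $\kappa < \kappa^+$ for the discrete topology on $\kappa$. For $(3) \Rightarrow (2)$ I would take a space $(X,\tau)$ with a base $\mathcal{B}$ of cardinality at most $\kappa$ and an open cover $\mathcal{U}$; the map $U \mapsto \{B \in \mathcal{B} : B \subseteq U\}$ injects $\tau$ into $Pow(\mathcal{B}) \hookrightarrow Pow(\kappa)$, so for each $B \in \mathcal{B}$ that is contained in some cover element, the set $\mathcal{N}_B = \{U \in \mathcal{U} : B \subseteq U\}$ can be viewed as a nonempty subset of $Pow(\kappa)$. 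Then $\mathbf{C}(\kappa,Pow(\kappa))$ produces $U_B \in \mathcal{N}_B$ for each such $B$, and $\{U_B\}$ is a subfamily of $\mathcal{U}$ of size at most $\kappa$ that still covers $X$.

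The real work is $(1) \Rightarrow (3)$. Given $\{\mathcal{N}_t\}_{t \in A} \subseteq Pow(Pow(\kappa))$ of nonempty sets with $|A| \leq \kappa$, the goal is to build an open cover of a discrete space whose subcovers must record one $N \in \mathcal{N}_t$ per $t \in A$. I would put $X = \kappa \sqcup A$ (of cardinality at most $\kappa$), write $t^*$ for the image of $t \in A$ in the second summand, and define
\[
U_{t,N} = \{t^*\} \cup N \qquad (t \in A,\ N \in \mathcal{N}_t).
\]
Together with the singletons $\{k\}$ for $k \in \kappa$, these form an open cover of discrete $X$. Two features are crucial: the assignment $(t,N) \mapsto U_{t,N}$ is injective, because $t^*$ is the unique element of $U_{t,N}$ lying outside $\kappa$ and then $N = U_{t,N} \setminus \{t^*\}$; and no singleton contains any $t^*$, so every subcover must include at least one $U_{t,N}$ for every $t \in A$.

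Corollaries~\ref{eqSpecLin} and~\ref{cor1}, combined with hypothesis $(1)$, yield $X \in Spec(\kappa^+\text{-Lindel\"of})$, so the cover admits a subcover $\mathcal{V}$ with $|\mathcal{V}| \leq \kappa$. Since the paper's definition of cardinality requires $\mathcal{V}$ to be in bijection with some ordinal, one can fix a well-ordering $\prec$ of $\mathcal{V}$. For each $t \in A$, the set $\mathcal{V}_t = \{U \in \mathcal{V} : t^* \in U\}$ is a nonempty subset of $\{U_{t,N} : N \in \mathcal{N}_t\}$, so its $\prec$-least element has the form $U_{t,N_t}$ for a uniquely determined $N_t \in \mathcal{N}_t$; setting $f(t) = N_t$ gives the required choice function. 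The main obstacle is precisely to extract a choice from the subcover without further use of the axiom of choice; the tag $t^*$ forces $\mathcal{V}_t$ to be a nonempty collection of $U_{t,N}$'s, and the well-ordering of $\mathcal{V}$ built into $|\mathcal{V}| \leq \kappa$ lets us pick the least one uniformly in $t$.
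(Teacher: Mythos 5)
Your proof is correct, but it closes the cycle in the opposite direction from the paper, which proves $(1)\Rightarrow(2)\Rightarrow(3)\Rightarrow(1)$. There the trivial leg is $(3)\Rightarrow(1)$ (apply the choice principle to the sets $X(i)=\{M\in\mathcal{M}\mid i\in M\}$); the base-coding injection $U\mapsto\{B\in\mathcal{B}\mid B\subseteq U\}$ is used in $(1)\Rightarrow(2)$ to transfer the Lindel\"of property of discrete $\kappa$ to any space of weight smaller than $\kappa^+$, rather than, as you do, to transport the choice principle into $Pow(\kappa)$; and the choice principle is extracted in $(2)\Rightarrow(3)$ from the discrete space on $\kappa\times A$ covered by the sets $U\times\{t\}$ with $U\in\mathcal{N}_t$. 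Your $(1)\Rightarrow(3)$ is the sharper single implication, since it derives $\mathbf{C}(\kappa,Pow(\kappa))$ from the weakest topological hypothesis, and your disjoint-union tagging $U_{t,N}=\{t^*\}\cup N$ is a little cleaner than the paper's product construction: the tag $t^*$ guarantees that every index $t$ is hit by any subcover even when $\mathcal{N}_t=\{\emptyset\}$, a case the paper must treat separately because $\emptyset\times\{t\}=\emptyset$ leaves such a $t$ invisible to its cover. Both routes ultimately rest on the same two ideas --- coding open sets by sets of basic open sets inside $Pow(\kappa)$, and tagging indices so that a subcover of size at most $\kappa$, being well-orderable, yields a uniform least choice --- just distributed differently over the three implications. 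One step you should make fully explicit in $(3)\Rightarrow(2)$: you apply $\mathbf{C}(\kappa,Pow(\kappa))$ to the images of the families $\mathcal{N}_B$ under the injections $\mathcal{U}\hookrightarrow Pow(\mathcal{B})\hookrightarrow Pow(\kappa)$ and then pull the chosen elements back through the injective map; this is legitimate and is exactly the transport-along-bijections (here, injections) principle the paper records immediately after the definition of $\mathbf{C}(\kappa,S)$.
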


\begin{proof}
This proof follows the ideas of Horst Herrlich in~\cite{Ho97} for $\kappa=\aleph_0$.

Let's assume $1$ to be true and prove $2$.

Consider a topological space $(X,\tau)$ with weight smaller than $\kappa^+$. We need to prove that $(X,\tau)$ is $\kappa^+$-Lindelöf.

Consider an open cover: $\mathcal{M}\subseteq\tau$ and $\bigcup_{M\in\mathcal{M}}M=X$.
Consider also $\{B_t\}_{t\in\kappa}$ a base for $(X,\tau)$.

The application $$\begin{matrix}S&:&\mathcal{M}&\to& Pow(\kappa)\\ &&U&\mapsto&\{i\in\kappa|\ B_i\subseteq U\}\end{matrix}$$ is then injective since $U=\bigcup_{B_i\subseteq U}B_i$ by the definition of base.

$S(\mathcal{M})\subseteq Pow(\kappa)$, and hence $Y=\bigcup_{U\in S(\mathcal{M})}U$ is a closed subset of $\kappa$ with its discrete topology. By assumption, it is then $\kappa^+$-Lindelöf. Since $S(\mathcal{M})$ is an open cover of $Y$, there exists $\mathcal{V}\subseteq S(\mathcal{M})$ such that $\bigcup_{U\in \mathcal{V}}U=Y$ and $|\mathcal{V}|<\kappa^+$.

Fix $x\in X$, then $\exists R\in \mathcal{M}\subseteq\tau$ such that $x\in R$; since $\mathcal{M}$ is an open cover. There exists also $t\in\kappa$ such that $x\in B_t$ and $B_t\subseteq R$ by the definition of base. By the definition of $S$, it follows that $t\in S(R)$.
In particular $t\in \bigcup_{U\in S(\mathcal{M})}U=\bigcup_{U\in \mathcal{V}}U$. There exists then $V\in \mathcal{V}$ such that $t\in V$.
Also then $V\in S(\mathcal{M})$ and exists $T\in\mathcal{M}$ with $V=S(T)$. Then $B_t\subseteq T$ and $T\in S^{-1}(\mathcal{V})$.
In conclusion $x\in T$ and $T\in S^{-1}(\mathcal{V})$; so $X=\bigcup_{U\in S^{-1}(\mathcal{V})}U$.

By definition, $ S^{-1}(\mathcal{V})$ is a subcover of $\mathcal{M}$; but we have to prove that $| S^{-1}(\mathcal{V})|<\kappa^+$.
Since $S$ is injective, it is bijective on its image. In conclusion $|S^{-1}(\mathcal{V})|=|\mathcal{V}|<\kappa^+$.

Let's assume $2$ to be true and prove $3$.

Consider $A$ such that $|A|\leq\kappa$ and $\{N_t\}_{t\in A}\subseteq Pow(\kappa)$ such that $\forall t\in A,\ N_t\ne\emptyset$. We need to find $f:A\to Pow(\kappa)$ such that $\forall t\in A,\ f(t)\in N_t$.

Consider $\kappa\times A$ with its discrete topology. Then $B=\{\{x\}|\ x\in \kappa\times A\}$ is a base for this topology and $|B|=|\kappa\times A|$. Since $|A|\leq \kappa$, it follows that $|\kappa\times A|=|A| |\kappa|\leq \kappa^2=\kappa<\kappa^+$.

In conclusion, $\kappa\times A$ with its discrete topology has weight smaller than $\kappa^+$. Then $\kappa\times A$ with its discrete topology is $\kappa^+$-Lindelöf. Since every subspace of $\kappa\times A$ is closed, every subspace of $\kappa\times A$ is also $\kappa^+$-Lindelöf. 

Define $\mathcal{U}=\{U\times \{t\}.\ U\in N_t,\ t\in A\}$. Since $\displaystyle\bigcup_{V\in\mathcal{U}}V\subseteq \kappa\times A$, there exists $\mathcal{V}\subseteq\mathcal{U}$ such that $\displaystyle\bigcup_{V\in\mathcal{U}}V =\displaystyle\bigcup_{V\in\mathcal{V}}V$ and $|\mathcal{V}|<\kappa^+$.

Consider then an injective function $r:\mathcal{V}\to\kappa$, that exists by the last paragraph.
It follows easily that if $N_t\ne\{\emptyset\}$, there exists $k\in\kappa$ such that $(k,t)\in \displaystyle\bigcup_{V\in\mathcal{U}}V$. In particular, $(k,t)\in V$ for some $V\in\mathcal{V}$. Also, $\forall V\in\mathcal{V},\ \exists t\in A, \exists U\in N_t$ such that $V=U\times\{t\}$. In conclusion, if $N_t\ne\{\emptyset\}$, then $\{U\times\{t\}|\ U\in N_t\}\cap\mathcal{V}\ne\emptyset$.

Since $\kappa$ is a cardinal, it is well-ordered an hence $r(\{U\times\{t\}|\ U\in N_t\}\cap\mathcal{V})$ has a least element when it's not empty, let's call it $Least(t)$. Also, since $r$ is injective $r(U\times\{t\})=r(V\times\{t\})$ implies that $U=V$. In conclusion there is a unique $U\in N_t$ such that $r(U\times\{t\})=Least(t)$, let's call it $U_{least}(t)$.

Define now the following function: $$\begin{matrix}f&:&A&\to& Pow(\kappa)\\ &&t&\mapsto&\left\{\begin{matrix}\emptyset&\text{if }N_t=\{\emptyset\}\\ U_{least}(t)&\text{otherwise}\end{matrix}\right.\end{matrix}$$

The function $f:A\to Pow(\kappa)$ is the choice function since $U_{least}(t)\in N_t$ by definition and if $N_t=\{\emptyset\}$, obviously $f(t)=\emptyset\in N_t$.

Let's assume $3$ to be true and prove $1$.

Consider $\mathcal{M}\subseteq Pow(\kappa)$ such that $\displaystyle\bigcup_{M\in\mathcal{M}}M=\kappa$. We have to find $\mathcal{N}\subseteq\mathcal{M}$ so that $|\mathcal{N}|<\kappa^+$ and $\displaystyle\bigcup_{N\in\mathcal{N}}N=\kappa$.

Consider the application $$\begin{matrix}X&:&\kappa&\to& Pow(Pow(\kappa))\\ &&i&\mapsto&\{M\in\mathcal{M}|\ i\in M\}\end{matrix}$$

It is obvious that $X(i)\ne\emptyset$ for any $i\in\kappa$ since $\mathcal{M}$ covers $\kappa$.

Since $\kappa\leq\kappa$ and $\emptyset\notin X(\kappa)\subseteq Pow(Pow(\kappa))$, by assumption exists $f:\kappa\to Pow(Pow(\kappa))$ such that $f(i)\in X(i)$.

Now $f(\kappa)\subseteq \mathcal{M}$ by definition. Evenmore, $|f(\kappa)|\leq \kappa<\kappa^+$.

Consider now $i\in\kappa$, then $i\in f(i)$ by definition of $X(i)$, then $\kappa\subseteq \bigcup_{i\in\kappa} f(i)$.

In conclusion, $f(\kappa)$ is the subcover of $\mathcal{M}$ we needed; and $\kappa$ with its discrete topology is $\kappa^+$-Lindelöf.

\end{proof}

\begin{cor}\label{corpos}
Assume ${\bf C}(\kappa,Pow(\kappa))$.

Then $A$ is in $Spec(\kappa^+$-Lindelöf$)$ iff $|A|<\kappa^+$.
\end{cor}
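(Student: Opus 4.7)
The plan is to derive both directions directly from Theorem \ref{eq1}, Corollary \ref{eqSpecLin}, and Corollary \ref{cor1}, invoking the hypothesis ${\bf C}(\kappa,Pow(\kappa))$ only for the sufficiency direction.

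First I would handle sufficiency: $|A|<\kappa^+\Rightarrow A\in Spec(\kappa^+$-Lindel\"of$)$. By Corollary \ref{eqSpecLin} it suffices to show that $(A,Pow(A))$ is $\kappa^+$-Lindel\"of. The family $\{\{a\}\mid a\in A\}$ is a base for this topology, and the map $a\mapsto\{a\}$ is a bijection between $A$ and this base, so its cardinality is $|A|<\kappa^+$. Thus $(A,Pow(A))$ has weight smaller than $\kappa^+$. Under ${\bf C}(\kappa,Pow(\kappa))$, Theorem \ref{eq1} makes clauses $2$ and $3$ equivalent, so every space of weight smaller than $\kappa^+$ is $\kappa^+$-Lindel\"of; in particular $(A,Pow(A))$ is, which gives $A\in Spec(\kappa^+$-Lindel\"of$)$.

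Next I would handle necessity: $A\in Spec(\kappa^+$-Lindel\"of$)\Rightarrow|A|<\kappa^+$. This direction uses no choice hypothesis. By Corollary \ref{eqSpecLin}, $(A,Pow(A))$ is $\kappa^+$-Lindel\"of. Apply this to the open cover $\mathcal{U}=\{\{a\}\mid a\in A\}$: since no element of $\mathcal{U}$ can be removed without uncovering a point, the only subcover of $\mathcal{U}$ is $\mathcal{U}$ itself, so $|\mathcal{U}|<\kappa^+$, and the bijection $a\mapsto\{a\}$ transfers this to $|A|<\kappa^+$. An alternative and slightly more structural phrasing is the contrapositive through Corollary \ref{cor1}: an injection $\kappa^+\to A$ would propagate $A\in Spec(\kappa^+$-Lindel\"of$)$ down to $\kappa^+\in Spec(\kappa^+$-Lindel\"of$)$ via the second bullet, contradicting the third bullet applied to the cardinal $\kappa^+$.

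Both directions collapse almost immediately onto previously established results, so no real obstacle is expected. The one subtlety worth flagging is that the statement ``$|A|<\kappa^+$'' implicitly demands $A$ to be well-orderable; in the necessity argument this well-orderability of $A$ is not assumed but is a by-product of the singleton-cover reduction, since exhibiting a subcover of cardinality $<\kappa^+$ of $\mathcal{U}$ witnesses that $\mathcal{U}$, and hence $A$, has a cardinality below $\kappa^+$.
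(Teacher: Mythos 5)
Your proof is correct and follows exactly the route the paper intends: the corollary is stated there without proof as an immediate consequence of Theorem \ref{eq1} together with Corollaries \ref{eqSpecLin} and \ref{cor1}, and your two directions (singleton base giving weight below $\kappa^+$ plus the equivalence of clauses $2$ and $3$ for sufficiency, the singleton cover for necessity) supply precisely the expected details, including the correct observation that the necessity direction yields well-orderability of $A$ as a by-product. The only caveat is that your ``alternative'' contrapositive via Corollary \ref{cor1} is strictly weaker in \textbf{ZF}, since the failure of $|A|<\kappa^+$ need not produce an injection $\kappa^+\to A$ when $A$ is not well-orderable; your primary singleton-cover argument, which you rightly flag, is the one that actually closes the claim.
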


\begin{cor}\label{corneg}
Assume $\neg {\bf C}(\kappa,Pow(\kappa))$.

Then $Spec(\kappa^+$-Lindelöf$)\subseteq \{A\in\textbf{Set}|\ |A|<\kappa\}$.
\end{cor}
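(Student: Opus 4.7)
The plan is to argue by contraposition using the two tools already in hand: Corollary \ref{eqSpecLin}, which reduces spectrum membership to $\kappa^+$-Lindel\"ofness of the discrete topology, and Corollary \ref{cor1}, whose second bullet propagates spectrum membership along injections. The strategy splits into two moves: first show that being in $Spec(\kappa^+\text{-Lindel\"of})$ forces $A$ to have a cardinality at all, with $|A|\leq\kappa$; then use the failure of ${\bf C}(\kappa,Pow(\kappa))$ to rule out the borderline case $|A|=\kappa$.

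First I would turn the hypothesis into a concrete obstruction inside the spectrum. Theorem \ref{eq1} gives that $\neg{\bf C}(\kappa,Pow(\kappa))$ is equivalent to $\kappa$ with its discrete topology failing to be $\kappa^+$-Lindel\"of, which via Corollary \ref{eqSpecLin} tells us that $\kappa\notin Spec(\kappa^+\text{-Lindel\"of})$. This is the single external fact the rest of the proof will bounce against.

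Next, fix $A\in Spec(\kappa^+\text{-Lindel\"of})$. By Corollary \ref{eqSpecLin}, $(A,Pow(A))$ is $\kappa^+$-Lindel\"of, so I would test this on the open cover $\mathcal{U}=\{\{a\}\mid a\in A\}$. Since $\mathcal{U}$ admits no proper subcover, any $\kappa^+$-small subcover must be $\mathcal{U}$ itself, giving $|\mathcal{U}|<\kappa^+$. The bijection $a\mapsto\{a\}$ then transfers a well-ordering back to $A$, showing that $|A|$ exists and satisfies $|A|\leq\kappa$. This step handles the delicate \textbf{ZF} issue that spectrum membership does not a priori guarantee that $A$ is well-orderable.

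Finally, I would exclude $|A|=\kappa$. If equality held, a bijection $\kappa\to A$ would in particular be an injection, and the second bullet of Corollary \ref{cor1} (applied with the cardinal $\kappa^+$) would push $\kappa$ into $Spec(\kappa^+\text{-Lindel\"of})$, contradicting the first step. Thus $|A|<\kappa$. The main obstacle throughout is the absence of a blanket cardinality for arbitrary sets in \textbf{ZF}; the singleton-cover trick is what neutralizes it, after which the argument is just a two-line combination of the earlier lemmas.
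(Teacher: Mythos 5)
Your proof is correct and is essentially the argument the paper intends (the corollary is stated without proof, but it follows exactly as you derive it from Theorem \ref{eq1}, Corollary \ref{eqSpecLin}, and the second bullet of Corollary \ref{cor1}). The singleton-cover step establishing that spectrum membership already forces $|A|$ to exist with $|A|\leq\kappa$ is the right way to handle the \textbf{ZF} well-orderability issue, and the final exclusion of $|A|=\kappa$ via the injection $\kappa\to A$ closes the argument cleanly.
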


\begin{cor}\label{cornegN}
If $\neg {\bf CC}(Pow(\aleph_0))$, then $Spec($Lindelöf$)=Spec($Compact$)$ is formed by the finite sets.

If ${\bf CC}(Pow(\aleph_0))$, then $Spec($Lindelöf$)$ is formed by the countable sets.
\end{cor}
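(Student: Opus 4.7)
The corollary is essentially a bookkeeping step that combines Corollaries~\ref{corpos} and \ref{corneg} (specialized to $\kappa=\aleph_0$, so $\kappa^+=\aleph_1$ and $\aleph_1$-Lindel\"of = Lindel\"of) with the well-known description of $Spec($Compact$)$. So the plan is to address the two cases separately and invoke the machinery already developed.

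For the case $\neg{\bf CC}(Pow(\aleph_0))$: first I would apply Corollary~\ref{corneg} with $\kappa=\aleph_0$ to conclude directly that $Spec($Lindel\"of$)\subseteq\{A\in\textbf{Set}:|A|<\aleph_0\}$, i.e.\ every set in $Spec($Lindel\"of$)$ is finite. Next I would argue the reverse inclusion by going through compactness: any finite set carries only finitely many topologies, and in each of them every open cover is finite, hence compact; so every finite set lies in $Spec($Compact$)$. Finally, since Compact $\Rightarrow$ Lindel\"of (i.e.\ $\aleph_0\leq\aleph_1$), the first bullet of Corollary~\ref{cor1} gives $Spec($Compact$)\subseteq Spec($Lindel\"of$)$. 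Chaining the inclusions $\{\text{finite sets}\}\subseteq Spec($Compact$)\subseteq Spec($Lindel\"of$)\subseteq\{\text{finite sets}\}$ yields equality throughout.

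For the case ${\bf CC}(Pow(\aleph_0))$: here Corollary~\ref{corpos} with $\kappa=\aleph_0$ applies verbatim and gives $A\in Spec($Lindel\"of$)$ iff $|A|<\aleph_1$. Since in the notation of the paper $|A|$ exists exactly when $A$ is bijective with an ordinal, the condition $|A|<\aleph_1$ is precisely the statement that $A$ is countable, finishing the proof.

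There is no real obstacle; the only mild subtlety is the convention on countability in \textbf{ZF}, namely that under the paper's definition ``$A$ is countable'' means $A$ is bijective with some ordinal strictly less than $\aleph_1$, and one should point this out so that the translation between Corollary~\ref{corpos} and the statement of the present corollary is unambiguous.
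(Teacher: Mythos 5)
Your proof is correct and follows exactly the route the paper intends: the corollary is left unproved in the paper as an immediate consequence of Corollaries~\ref{corpos} and~\ref{corneg} (at $\kappa=\aleph_0$) combined with $Spec(\text{Compact})=\{\text{finite sets}\}$ and the monotonicity of spectra from Corollary~\ref{cor1}, which is precisely the chain of inclusions you assemble. Your closing remark on what ``countable'' must mean under the paper's convention that $|A|$ exists only for well-orderable sets is a genuine and worthwhile clarification rather than a gap.
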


The conclusion is that, the spectra vary and hence, it should follow that the anti-properties mean different things depending on different axioms we could add to $\bf{ZF}$.

\begin{thm}
$(\mathbb{N},Pow(\mathbb{N}))$ is anti-Lindelöf in $\bf{ZF}$.
\end{thm}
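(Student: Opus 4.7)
The plan is to derive this as an immediate consequence of Corollary \ref{eqSpecLin}. My first step is to observe that the subspace topology on any $A \subseteq \mathbb{N}$ induced by $Pow(\mathbb{N})$ is exactly $Pow(A)$; that is, every subspace of $(\mathbb{N},Pow(\mathbb{N}))$ is itself a discrete space of the form $(A,Pow(A))$ for some $A\subseteq \mathbb{N}$.

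Now suppose such a subspace $(A,Pow(A))$ happens to be Lindel\"of. By Corollary \ref{eqSpecLin}, this is exactly the condition for $A\in Spec(\textrm{Lindel\"of})$. Hence every Lindel\"of subspace of $(\mathbb{N},Pow(\mathbb{N}))$ belongs to $Spec(\textrm{Lindel\"of})$, which is precisely the definition of anti-Lindel\"of.

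No choice hypothesis is needed, because Corollary \ref{eqSpecLin} is proved in $\mathbf{ZF}$; the only fact really being exploited is that the Lindel\"of property is preserved under passing to coarser topologies. In particular, no case analysis on whether $\mathbf{CC}(Pow(\aleph_0))$ holds is required, even though Corollary \ref{cornegN} shows that the spectrum itself swings drastically depending on that axiom. The same argument actually shows, more generally, that $(X,Pow(X))$ is anti-$P$ for any $X$ and any topological property $P$ preserved by coarser topologies.

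Since the proof is essentially a one-line reduction, there is no real obstacle; the subtle point is merely appreciating that the spectrum has been defined in exactly the way needed to make discrete spaces anti-$P$ for free. The mild care required is just to confirm that subspace topologies of discrete spaces are discrete and to apply the previously established corollary verbatim.
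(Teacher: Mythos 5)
Your proof is correct and follows essentially the same route as the paper: both arguments observe that every subspace of $(\mathbb{N},Pow(\mathbb{N}))$ is of the form $(A,Pow(A))$ and then apply Corollary \ref{eqSpecLin} to conclude that any Lindel\"of subspace lies in the spectrum. The closing remark that the argument generalizes to any property preserved by coarser topologies is a nice observation but does not change the substance of the proof.
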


\begin{proof}

Assume $Y$ is a lindelöf subspace of $(\mathbb{N},Pow(\mathbb{N}))$. Let's proof that $Y$ is in $Spec($Lindelöf$)$. By the previous result \ref{eqSpecLin}
it is equivalent to prove that $(Y,Pow(Y))$ is Lindelöf. By definition, since $Y$ is a subspace of a discrete space, $Y=(Y,Pow(Y))$ and by assumption $Y$ is Lindelöf. Exactly what needed to prove.
\end{proof}

This previous statement is true independent of ${\bf AC}$ or any of its weaker axioms, but this truth means different things depending on what axioms are added to \textbf{ZF}: Every subspace of $\mathbb{N}$ is lindelöf, or only the finite subspaces of $\mathbb{N}$ are lindelöf.

\section{New equivalences with anti-properties}

In this last section, we will prove some equivalences of topological statements and axioms weaker than \textbf{AC}.

\begin{thm}\label{eq3}
The following are equivalent for an infinite cardinal $\kappa$:
\begin{enumerate}
\item ${\bf C}(\kappa,Pow(\kappa))$
\item The one point compactification of $\kappa$ with its discrete topology is anti-$\kappa^+$-Lindelöf
\item $\kappa\in Spec(\kappa^+$-Lindelöf$)$
\end{enumerate}
\end{thm}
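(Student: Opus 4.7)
The plan is to split the cycle into three short pieces, each reducing to a result already in the excerpt. Write $X=\kappa\cup\{\infty\}$ for the one point compactification of $(\kappa,Pow(\kappa))$, whose open sets are the subsets of $\kappa$ together with the sets of the form $\{\infty\}\cup(\kappa\setminus F)$ with $F\subseteq\kappa$ finite.

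The equivalence $(1)\Leftrightarrow(3)$ is immediate: by Corollary~\ref{eqSpecLin}, $\kappa\in Spec(\kappa^+$-Lindel\"of$)$ unfolds as $(\kappa,Pow(\kappa))$ being $\kappa^+$-Lindel\"of, and that is precisely item~$1$ of Theorem~\ref{eq1}, which is equivalent to ${\bf C}(\kappa,Pow(\kappa))$.

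For $(1)\Rightarrow(2)$ I would argue vacuously. Under ${\bf C}(\kappa,Pow(\kappa))$, Corollary~\ref{corpos} identifies $Spec(\kappa^+$-Lindel\"of$)$ with the class of all sets of cardinality $<\kappa^+$. Since $|X|=\kappa<\kappa^+$, every subset of $X$ sits in that spectrum automatically, so the anti-$\kappa^+$-Lindel\"of condition on $X$ holds trivially: whichever subspaces happen to be $\kappa^+$-Lindel\"of have their underlying set in $Spec(\kappa^+$-Lindel\"of$)$ with no further work.

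For $(2)\Rightarrow(3)$ I would invoke Proposition~\ref{ban}(4). The space $X$ is compact, because any open cover must contain an open neighbourhood of $\infty$, whose complement in $\kappa$ is finite and hence covered by finitely many members of the cover; compactness implies $\kappa^+$-Lindel\"ofness. Combined with the hypothesis that $X$ is anti-$\kappa^+$-Lindel\"of, Proposition~\ref{ban}(4) gives $X\in Spec(\kappa^+$-Lindel\"of$)$. The inclusion $\kappa\hookrightarrow X$ is injective, so the second bullet of Corollary~\ref{cor1} transfers the spectrum membership to $\kappa$, which is (3). The one genuinely subtle point is that the hypothesis (2) is applied not to $\kappa$ itself but to $X$: it is the compactness of $X$, together with anti-$\kappa^+$-Lindel\"ofness, that forces the spectrum to be large enough to contain $\kappa$. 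The verification of compactness of $X$ and of the shape of the subspace topology on $\kappa\subseteq X$ are the only routine checks, and everything else is bookkeeping against the earlier corollaries.
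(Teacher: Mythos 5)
Your proposal is correct and follows essentially the same route as the paper's own proof: $(1)\Leftrightarrow(3)$ via Corollary~\ref{eqSpecLin} and Theorem~\ref{eq1}, $(1)\Rightarrow(2)$ by placing $X$ (hence all its subsets) in the spectrum so that anti-$\kappa^+$-Lindel\"ofness holds vacuously, and $(2)\Rightarrow(3)$ from compactness of $X$ plus Proposition~\ref{ban}(4) and the injection $\kappa\hookrightarrow X$ via Corollary~\ref{cor1}. The only differences are cosmetic: you spell out the compactness of $X$ and cite \ref{corpos} where the paper states the cardinality bound directly.
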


\begin{proof}
Consider $(X,\tau)$ the one point
compactifiction of that $(\kappa,Pow(\kappa))$. 

First, notice that~\ref{eqSpecLin} makes \ref{eq1}$(1)$ and $3$ equivalent. So, $1$ and $3$ are equivalent.

Let's assume $1$ and prove $2$.

Since $|X|=|\kappa\cup\{\kappa\}|=\kappa<\kappa^+$, from~\ref{corpos}
it follows that $X\in Spec(\kappa^+$-Lindelöf$)$; and hence it is anti-$\kappa^+$-Lindelöf.

Let's assume $2$ and prove $3$.

Since $(X,\tau)$ is compact, then it is $\kappa^+$-Lindelöf. By assumption, it follows then that $X\in Spec(\kappa^+$-Lindelöf$)$.
Since $|X|=\kappa$, there is an injective function $f:\kappa\to X$; so $\kappa\in Spec(\kappa^+$-Lindelöf$)$ considering~\ref{cor1}.
\end{proof}

\begin{cor}
In case $\kappa=\aleph_0$, we can add the following to the previous equivalences:

 There is a topological space $(X,\tau)$ which is anti-Lindelöf, but not anti-compact.
\end{cor}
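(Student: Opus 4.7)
The plan is to extend the equivalences in Theorem~\ref{eq3} for $\kappa=\aleph_0$ by proving two further implications involving the new statement: that $(2)$ implies the new statement, and that the new statement implies $(1)$. Combined with the chain already established in Theorem~\ref{eq3}, this yields the desired extra equivalence.

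For the forward direction, the candidate witness space $X$ is exactly the one-point compactification of $\mathbb{N}$ with the discrete topology, i.e., the space appearing in statement~$(2)$. By construction $X$ is compact, and by the hypothesis of~$(2)$ it is anti-Lindel\"of. To show $X$ is not anti-compact, I would argue by contradiction: if $X$ were both compact and anti-compact, then Proposition~\ref{ban}$(4)$ would place $X\in Spec($Compact$)$, which in \textbf{ZF} is the class of finite sets. But $|X|=\aleph_0$, a contradiction. Hence $X$ witnesses the new statement.

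For the converse, I would argue by contradiction, assuming $\neg{\bf CC}(Pow(\aleph_0))$ and showing no such $X$ exists. Let $X$ be anti-Lindel\"of but not anti-compact. The failure of being anti-compact produces a compact subspace $Y\subseteq X$ with $Y\notin Spec($Compact$)$, so $Y$ is not finite. Since $Y$ is compact it is in particular Lindel\"of, and the anti-Lindel\"of property of $X$ then forces $Y\in Spec($Lindel\"of$)$. But under $\neg{\bf CC}(Pow(\aleph_0))$, Corollary~\ref{cornegN} collapses $Spec($Lindel\"of$)$ to $Spec($Compact$)$, so $Y$ would have to be finite, contradicting its choice.

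There is no real technical obstacle; the subtlety is just pairing each half with the right previously established fact. Proposition~\ref{ban}$(4)$ supplies the contradiction in the forward direction because the natural witness is itself compact, and Corollary~\ref{cornegN}, precisely because it collapses $Spec($Lindel\"of$)$ onto $Spec($Compact$)$ in the absence of ${\bf CC}(Pow(\aleph_0))$, delivers the contradiction in the reverse. Choosing the one-point compactification as the witness space is natural since statement~$(2)$ is literally formulated in terms of it.
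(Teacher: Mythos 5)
Your proposal is correct and follows essentially the same route as the paper: the same decomposition ($2\Rightarrow$ new statement $\Rightarrow 1$), the same witness (the one-point compactification) together with Proposition~\ref{ban}$(4)$ for the forward half, and the spectrum collapse of Corollary~\ref{cornegN} for the converse. The only cosmetic difference is that in the converse you unfold the definition of anti-compact directly where the paper instead invokes Proposition~\ref{ban}$(2)$.
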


\begin{proof}
Assume $\neg{\bf C}(\kappa,Pow(\kappa))$ and let's show the negation of the new statement.

From~\ref{cornegN}, we get that $Spec($Lindelöf$)=Spec($Compact$)$.

Since compact $\Rightarrow$ Lindelöf; one of the results of~\ref{ban} states that
under this conditions anti-Lindelöf implies anti-compact.

Assume now the negation of new statement, and let's prove that the one point compactification of $\aleph_0$ with its discrete topology is not anti-Lindelöf.

The one point compactification is compact, then it is also Lindelöf. Assume that this space is anti-Lindelöf. By assumption, it is then anti-compact also. 

By~\ref{ban}, $Y=\aleph_0\cup\{\aleph_0\}\in Spec($Compact$)$. This is absurd, since $Y$ is not finite.
\end{proof}

This last result cannot be done for higher cardinals as easily. The reason is that $\textbf{CH}$; which we understand as $|Pow(\aleph_0)|=\aleph_1$, is independent of ${\bf ZF}+{\bf CC}$,
but the following theorem holds in $\textbf{ZF}$.

\begin{thm}
${\bf CH} \Rightarrow \aleph_0\in Spec(\aleph_2$-Lindelöf$)$.

In other words, $Spec(\aleph_2$-Lindelöf$)=Spec($Compact$)\ \Rightarrow\ \neg{\bf CH} \wedge\ \neg {\bf CC}(Pow(\aleph_0))$.
\end{thm}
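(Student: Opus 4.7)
The plan is to prove the forward implication directly by a cardinality argument, without invoking Theorem~\ref{eq1}. The key observation is that the $\aleph_2$-Lindel\"of property only demands a subcover of cardinality strictly less than $\aleph_2 = \aleph_1^+$, i.e., of cardinality at most $\aleph_1$. Under \textbf{CH} in the form $|Pow(\aleph_0)| = \aleph_1$, any topology on $\aleph_0$ is already so small that every open cover is automatically its own subcover of the required size.

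More precisely, I would argue as follows. Let $\tau$ be an arbitrary topology on $\aleph_0$. Since $\tau \subseteq Pow(\aleph_0)$, we have $|\tau| \leq |Pow(\aleph_0)| = \aleph_1$ by the hypothesis \textbf{CH}. Given any open cover $\mathcal{U} \subseteq \tau$ of $\aleph_0$, we then have $|\mathcal{U}| \leq |\tau| \leq \aleph_1 < \aleph_2$, so $\mathcal{U}$ itself is a subcover of cardinality less than $\aleph_2$. Hence $(\aleph_0, \tau)$ is $\aleph_2$-Lindel\"of. Since $\tau$ was arbitrary, $\aleph_0 \in Spec(\aleph_2\text{-Lindel\"of})$.

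For the contrapositive form stated in the theorem, I would separately derive both $\neg \textbf{CH}$ and $\neg {\bf CC}(Pow(\aleph_0))$ from the hypothesis $Spec(\aleph_2\text{-Lindel\"of}) = Spec(\text{Compact})$. The negation of \textbf{CH} is immediate: if \textbf{CH} held, the first part would place $\aleph_0$ in $Spec(\aleph_2\text{-Lindel\"of})$, whereas $Spec(\text{Compact})$ consists only of finite sets, so $\aleph_0 \notin Spec(\text{Compact})$. For $\neg {\bf CC}(Pow(\aleph_0))$, I would assume ${\bf CC}(Pow(\aleph_0))$ for contradiction: by Corollary~\ref{cornegN}, every countable set lies in $Spec(\text{Lindel\"of}) = Spec(\aleph_1\text{-Lindel\"of})$, and by Corollary~\ref{cor1} this is contained in $Spec(\aleph_2\text{-Lindel\"of})$, again putting $\aleph_0$ into $Spec(\aleph_2\text{-Lindel\"of})$ and contradicting equality with $Spec(\text{Compact})$.

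Honestly, I do not see a real obstacle here: the whole argument rests on the trivial but pleasant observation that any topology is a subset of the power set of the underlying set, so the cardinality of the power set is an upper bound on the weight. The only minor care point is making sure the second part of the stated equivalence is handled correctly — one has to realise that to extract $\neg {\bf CC}(Pow(\aleph_0))$ the relevant tool is Corollary~\ref{cornegN} combined with the monotonicity of Lindel\"of spectra from Corollary~\ref{cor1}, rather than a direct use of \textbf{CH}.
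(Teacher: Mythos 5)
Your proposal is correct and follows essentially the same cardinality argument as the paper: \textbf{CH} bounds every open cover on $\aleph_0$ by $\aleph_1<\aleph_2$, so every topology on $\aleph_0$ is trivially $\aleph_2$-Lindel\"of (the paper checks only the discrete topology and invokes Corollary~\ref{eqSpecLin}, which is an immaterial difference). You also explicitly verify the ``in other words'' reformulation via Corollaries~\ref{cornegN} and~\ref{cor1}, which the paper leaves implicit but which is exactly the intended reading.
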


\begin{proof}
Consider $\aleph_0$ with its discrete topology $Pow(\aleph_0)$. By ${\bf CH}$,  $|Pow(\aleph_0)|=\aleph_1$
and then every open cover $\mathcal{U}$ fulfills $|\mathcal{U}|\leq\aleph_1<\aleph_2$.
In conclusion, $(\aleph_0,Pow(\aleph_0))$ is trivially $\aleph_2$-Lindelöf; and hence $\aleph_0\in Spec(\aleph_2$-Lindelöf$)$.
\end{proof}

 This theorem states that if every countable set has a topological space such that it is not $\aleph_2$-Lindelöf, $\textbf{CH}$ doesn't hold. In conclusion, there are more axioms than those weaker than $\bf{AC}$ which vary the spectrum of the family of Lindelöf properties. In conclusion, every combination is possible:
 
 \begin{itemize}
 \item $Spec(\aleph_2$-Lindelöf$)=Spec(\aleph_1$-Lindelöf$)=Spec($Compact$)$
 \item $Spec(\aleph_2$-Lindelöf$)\supset Spec(\aleph_1$-Lindelöf$)=Spec($Compact$)$
 \item $Spec(\aleph_2$-Lindelöf$)=Spec(\aleph_1$-Lindelöf$)\supset Spec($Compact$)$
 \item $Spec(\aleph_2$-Lindelöf$)\supset Spec(\aleph_1$-Lindelöf$)\supset Spec($Compact$)$
 \end{itemize}

This results have deep consequences in \textbf{ZF}, as the following example shows:

\begin{thm}
In \textbf{ZF}, $(X,\tau)$ is not anti-compact iff, in \textbf{ZF}, `$(X,\tau)$ is not anti-Lindelöf' is true or undecidable.

Or in other words; $(X,\tau)$ is anti-Lindelöf in \textbf{ZF} iff `$(X,\tau)$ is anti-compact' is true or undecidable in \textbf{ZF}.
\end{thm}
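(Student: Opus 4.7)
I will interpret the theorem as the meta-equivalence $\textbf{ZF} \vdash \neg(\text{anti-compact})(X,\tau) \Leftrightarrow \textbf{ZF} \nvdash (\text{anti-Lindel\"of})(X,\tau)$. The driving tools are Corollary~\ref{cornegN}, which gives $Spec(\text{Lindel\"of}) = Spec(\text{Compact}) = \{\text{finite sets}\}$ in any model of $\textbf{ZF} + \neg{\bf CC}(Pow(\aleph_0))$, and Proposition~\ref{ban}(2), which transfers anti-properties when the relevant spectra coincide (so that Compact $\Rightarrow$ Lindel\"of together with equal spectra yields anti-Lindel\"of $\Rightarrow$ anti-compact in the $\neg{\bf CC}(Pow(\aleph_0))$ fragment).

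For the forward direction ($\Rightarrow$), I assume $\textbf{ZF} \vdash \neg(\text{anti-compact})$, so $X$ has a provably infinite compact subspace $Y$. Specializing to any model of $\textbf{ZF} + \neg{\bf CC}(Pow(\aleph_0))$, $Y$ is Lindel\"of (since Compact $\Rightarrow$ Lindel\"of) and infinite; by Corollary~\ref{cornegN}, $Spec(\text{Lindel\"of})$ is the class of finite sets in this model, so $Y$ is a Lindel\"of subspace of $X$ outside $Spec(\text{Lindel\"of})$. Hence $X$ fails anti-Lindel\"ofness there, and consequently anti-Lindel\"of is not a theorem of $\textbf{ZF}$.

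For the reverse direction ($\Leftarrow$), I argue: assume $\textbf{ZF} \nvdash (\text{anti-Lindel\"of})$, so anti-Lindel\"of fails in some model $M$, yielding a Lindel\"of subspace $Y \subseteq X$ with $Y \notin Spec(\text{Lindel\"of})$. One may arrange $M$ to be a model of $\textbf{ZF} + \neg{\bf CC}(Pow(\aleph_0))$; by Corollary~\ref{cornegN} we then have $Spec(\text{Lindel\"of}) = Spec(\text{Compact}) = \{\text{finite sets}\}$, and $Y$ being an infinite Lindel\"of subspace produces an infinite compact subspace in $M$ via the collapse of the two spectra and Proposition~\ref{ban}(2). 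Since the resulting cover-and-subspace witness is ZF-definable, the existence of an infinite compact subspace of $X$ lifts to $\textbf{ZF} \vdash \neg(\text{anti-compact})$.

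The principal obstacle lies in the reverse direction, where one must (i) choose the model $M$ to satisfy $\neg{\bf CC}(Pow(\aleph_0))$, (ii) extract an infinite \emph{compact} subspace from the Lindel\"of data using the spectral collapse, and (iii) verify absoluteness of this witness so that it gives a ZF-provable failure of anti-compactness. The clean topological content is entirely in Corollary~\ref{cornegN} and Proposition~\ref{ban}(2); what remains is careful meta-theoretic bookkeeping of ``provable in $\textbf{ZF}$'' versus ``holds in some/every model'' versus ``consistent with $\textbf{ZF} + \neg{\bf CC}(Pow(\aleph_0))$''.
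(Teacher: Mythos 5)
The paper gives no proof of this theorem at all---only the remark following it, which covers the single implication ``if \textbf{ZF} proves anti-Lindel\"of then \textbf{ZF} cannot prove non-anti-compactness.'' Your forward direction establishes exactly that implication (in contrapositive form) and is essentially sound: in a model of $\textbf{ZF}+\neg{\bf CC}(Pow(\aleph_0))$ (which exists if \textbf{ZF} is consistent), Corollary~\ref{cornegN} forces every infinite compact subspace to be a Lindel\"of subspace outside $Spec($Lindel\"of$)$, so a provable failure of anti-compactness rules out provable anti-Lindel\"ofness. Two small caveats: you should say ``in every model there is an infinite compact subspace $Y$,'' not that $Y$ is ``provably infinite,'' and the argument silently uses $Con(\textbf{ZF}+\neg{\bf CC}(Pow(\aleph_0)))$.

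Your reverse direction has a genuine gap, and indeed that half of the stated equivalence appears to be false. (i) From $\textbf{ZF}\nvdash$ anti-Lindel\"of you get \emph{some} countermodel $M$, but you cannot ``arrange'' $M$ to satisfy $\neg{\bf CC}(Pow(\aleph_0))$: the failure may occur only in models of ${\bf CC}(Pow(\aleph_0))$, witnessed by an uncountable Lindel\"of subspace. (ii) Even inside a model of $\neg{\bf CC}(Pow(\aleph_0))$, Proposition~\ref{ban}(2) yields anti-Lindel\"of $\Rightarrow$ anti-compact, i.e.\ $\neg$anti-compact $\Rightarrow\neg$anti-Lindel\"of --- the \emph{opposite} of what you need; equal spectra do not turn an infinite Lindel\"of subspace into an infinite compact one, since Lindel\"of does not imply compact. (iii) Truth of $\neg$anti-compact in one model does not ``lift'' to $\textbf{ZF}\vdash\neg$anti-compact. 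Concretely, take $X=\mathbb{R}$ with the co-countable topology. In any model of $\textbf{ZFC}$ every compact subspace of $X$ is finite (an infinite compact subspace would contain a sequence $\{a_m\}$, and the open cover by the sets $X\setminus\{a_m|\ m>n\}$ has no finite subcover), so $X$ is anti-compact there and $\textbf{ZF}\nvdash\neg$anti-compact; yet in that same model $X$ is an uncountable Lindel\"of space, hence not anti-Lindel\"of, so $\textbf{ZF}\nvdash$ anti-Lindel\"of. Thus $\textbf{ZF}\nvdash$ anti-Lindel\"of does not imply $\textbf{ZF}\vdash\neg$anti-compact; only the implication proved in your forward direction survives.
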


This means that if there is a proof that a space is anti-Lindelöf in \textbf{ZF}, then there can be no proof within \textbf{ZF} that the space is not anti-compact. In other words, it can be proven that the space is anti-compact or more axioms are needed to prove one or the other.

\section{Isabelle/Isar}

In the Isabelle webpage~\cite{IsabIsar}, we can read the following paragraphs to know what Isabelle/Isar is:

`The Intelligible semi-automated reasoning (Isar) approach to readable formal proof documents sets out to bridge the semantic gap between internal notions of proof given by state-of-the-art interactive theorem proving systems and an appropriate level of abstraction for user-level work.'

`Isabelle is a generic proof assistant. It allows mathematical formulas to be expressed in a formal language and provides tools for proving those formulas in a logical calculus. The Isabelle system offers Isar as an alternative proof language interface layer, beyond traditional tactic scripts.'

Most of the results in this paper are available in isarmathlib~\cite{IMLdown}; a repository of
mathematical results checked in Isabelle/Isar using its \textbf{ZF} axiomatization. Some of those files are presented in a nicer looking webpage~\cite{IMLpres}.

\bibliographystyle{plainurl}
\bibliography{HorstHerrlich,antiProp,Isabelle}

\end{document}